\newtheorem{theorem}{Theorem}
\newtheorem{proof}{Proof}
\begin{document}

\title[Simple Second-Order Finite Differences for PDEs with Interfaces]{Simple Second-Order Finite Differences for Elliptic PDEs with Discontinuous Coefficients and Interfaces}

\author{Chung-Nan Tzou \and Samuel N. Stechmann}
\subjclass[2010]{65M06, 76T99, 35J05}
\keywords{sharp interface,
immersed boundary method, 
immersed interface method, 
ghost fluid method, 
jump conditions, 
phase changes}


\begin{abstract}
In multi-phase fluid flow, fluid-structure interaction, and other applications,
partial differential equations (PDEs) often arise with discontinuous coefficients
and singular sources (e.g., Dirac delta functions).
These complexities arise due to changes in material properties at 
an immersed interface or embedded boundary, which may have an irregular shape.
Consequently, the solution and its gradient 
can be discontinuous, and numerical methods can be difficult to design.
Here a new method is presented and analyzed, using 
a simple formulation of one-dimensional finite differences 
on a Cartesian grid, allowing for a relatively easy setup for 
one-, two-, or three-dimensional problems.
The derivation is relatively simple and mainly involves centered finite difference formulas, 
with less reliance on the Taylor series expansions of typical immersed interface method derivations.
The method preserves a sharp interface with discontinuous solutions,
obtained from a small number of iterations (approximately five) of solving 
a symmetric linear system with updates to the right-hand side.
Second-order accuracy is rigorously proven in one spatial dimension
and demonstrated through numerical examples in two and three spatial dimensions.
The method is tested here on the variable-coefficient Poisson equation,
and it could be extended for use on time-dependent problems
of heat transfer, fluid dynamics, or other applications.
\\

\end{abstract}

\maketitle

\section{Introduction}
\label{sec:intro}
In many applications,
partial differential equations (PDEs) arise with discontinuous coefficients
and singular sources (e.g., Dirac delta functions).
These complexities often arise due to changes in material properties at an interface
or immersed boundary, which may have an irregular shape;
see Fig.~\ref{Domain_Diag}.
For example, the immersed boundary may be a rigid or flexible structure,
such as a heart valve
\cite{glmp09},
or the immersed interface may separate two fluids as in gas bubbles or liquid droplets
\cite{sfso98}.
Our own interest was motivated by recently derived equations for atmospheric dynamics,
in the limit of rapid rotation and strong (moist) stratification, 
including phase changes of water and phase interfaces between cloudy and non-cloudy regions
\cite{ss17}.

\begin{figure}[!htbp]
    \centering
        \includegraphics[width=\textwidth]{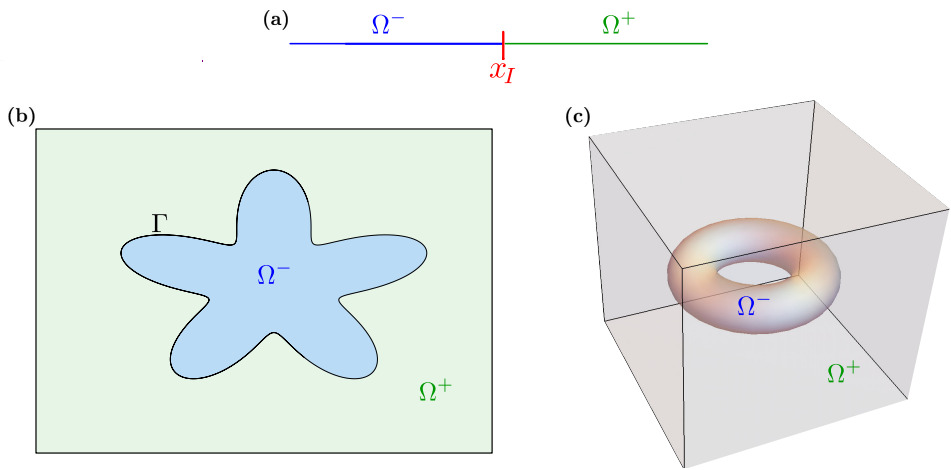}
        \caption{Examples of interfaces separating two regions $\Omega^-$ and $\Omega^+$
                 in (a) 1D, (b) 2D, and (c) 3D.}
         \label{Domain_Diag}
\end{figure}

For PDEs with such complexities, numerical methods can be challenging 
to design. Elliptic PDEs are a common test case,
and they often form an important component of time-dependent systems.
Many methods have been proposed using
finite element methods
\cite{betal10,hetal12},
finite volume methods
\cite{jly06,ccg11},
and finite difference methods.
Each of these approaches can be valuable in different situations,
depending on priorities of computational efficiency, ease of implementation, etc.
A primary goal of the present paper is simplicity,
and finite difference methods, with Cartesian grids, are perhaps
the simplest class of methods.
Therefore, for comparison, we next describe some finite difference methods in more detail.

The immersed boundary method (IBM)
was introduced in the pioneering work of 
Peskin
\cite{p72,p77jcp,p02}.
The IBM is simple and efficient and has been applied
to a variety of problems with three-dimensional
fluid flow
\cite{glmp09,kbgd16}.
In the IBM approach, the effect of the immersed boundary
is represented as a forcing function applied to the fluid.
Ideally, the forcing should be singular and the solution should
have discontinuities. However, the IBM uses
a smoothed version of a Dirac delta function,
which introduces some smearing near the boundary or interface
and causes the solution to be continuous.
The method was originally designed with first-order accuracy, and
it has been extended to be ``formally'' second-order accurate
\cite{lp00,gp05,mp08,fetal13i,fetal14ii},
although the ``formal'' second-order accuracy holds only in the case
that the forcing is sufficiently smooth, not in the case of a nearly
singular forcing.

The immersed interface method (IIM)
was developed to produce improvements such as
second-order accuracy and
a solution with a sharp discontinuity and no smearing at the interface
\cite{ll94,li96}.
The method is derived by allowing an extended stencil,
beyond the standard stencil for the Laplacian operator,
to be used at grid points near the interface;
for the extended stencil,
the finite-difference weights are then found by the
method of undetermined coefficients, with constraints on the coefficients
being chosen to achieve the desired local truncation error based on Taylor series.
The extended stencil of the IIM must be chosen with care
in order to avoid instability
\cite{fk01,li01,dil03},
since the IIM linear operator is not symmetric.
One approach is to carefully construct the IIM operator to satisfy
a discrete maximum principle by using constrained quadratic
optimization techniques
\cite{li01,dil03}.

While the IIM has been implemented in multi-dimensional fluid flow problems,
the formulation is complicated
by the need for derivations of many spatial and temporal jump conditions,
and also derivatives of jump conditions
\cite{ll01,ll03,xw06siam,xw06jcp}.
Many other versions of the IIM with different derivations have been developed 
\cite{wiegmann2000explicit,b04,rw03,lai2008simple}, and some are discussed further 
in~\S\ref{sec:comparisons} below.  In the present paper, one distinguishing feature 
is that the present derivation involves the relatively simple use of centered finite difference formulas, 
without the need for derivatives of jump conditions, and with less reliance on the 
Taylor series expansions of typical IIM derivations. Such simplifications to the derivations 
should contribute to enhanced ease of use on three-dimensional problems.

The ghost fluid method (GFM) is another method that produces 
a solution with a sharp discontinuity and no smearing at the interface
\cite{lfk00}.
While it is only first-order accurate, the GFM is
simple to formulate and implement, and it is efficient for problems with
three-dimensional multi-phase fluid flow
\cite{kfl00,sshoz07}.
Another advantageous property is that the GFM finite difference operator
is symmetric, which allows the use of conjugate gradient algorithms
and guarantees robustness of the method.

In the present paper,
the goal is to design a method with the advantageous properties of
the GFM -- sharp interface, easy to formulate and implement, 
efficient for use on three-dimensional problems,
and utilization of a symmetric matrix -- while also achieving the possibility of
second-order accuracy.
The simple formulation here (\S\ref{sec:methods}) 
uses elementary finite differences along
one-dimensional coordinates, and the resulting linear system can be written 
with the same symmetric matrix as the GFM but with corrections to the
right-hand side that yield second-order accuracy.
The right-hand-side corrections are determined iteratively,
which is the main new computational expense beyond the GFM.
Note that, while this interesting algorithmic connection exists with the GFM,
the derivations of the GFM and the present method are quite different;
the present method is derived using finite differences (with explicit estimates
of local truncation error from finite difference formulas), 
whereas the GFM and its error and convergence are 
based on a weak formulation of the problem
\cite{ls03mc}.
Example solutions with the present method are shown for one-dimensional (1D),
two-dimensional (2D), and three-dimensional (3D) problems
(\S\ref{sec:examples}).
A small, fixed number of iterations ($\approx 5$) is shown to be
sufficient for achieving a second-order accurate solution
(\S\ref{sec:stopping}),
which suggests the present methods may be efficient enough for use on
complex three-dimensional fluid flow.
Conclusions and further comparisons with the formulations of other methods 
\cite{wiegmann2000explicit,b04,rw03,lai2008simple,mnr11,mnr17}
are discussed in~\S\ref{sec:comparisons} and \S\ref{sec:conclusions}.

Given that many previous methods have been proposed for this problem over 
many years, it is worthwhile to emphasize one of the main distinguishing 
features of the present method: a simple derivation and setup. The derivation here 
is mainly achieved using centered finite difference formulas, so it is
relatively easy to formulate and set up the method, even in 3D.  
At the same time, the method does utilize a small number of iterations,
so it may have a greater computational expense than some other methods
(unless one could propose a more sophisticated and faster iterative procedure,
a direction which we have not yet pursued exhaustively).
In summary, in terms of practical use, 
the simple derivation and formulation should be useful
for applications where one is less concerned with achieving the 
least possible expense of the computation itself and more concerned with minimizing
the time and effort needed to initially design and code the method.

\section{Numerical methods}
\label{sec:methods}

In this section, the numerical methods are derived
for 1D, 2D, and 3D
equations in sections
\ref{sec:method-1d},
\ref{sec:method-2d}, and
\ref{sec:method-3d}, respectively.
A rigorous proof of second-order convergence is presented in section
\ref{sec:proof}
for the 1D case.

\subsection{One dimension}
\label{sec:method-1d}
Consider a one (spatial) dimensional domain $\Omega$ divided into subdomains $\Omega^+$ 
and $\Omega^-$ by an interface $\Gamma$. The variable coefficient Poisson 
equation on each subdomain reads
\begin{equation}\label{1D_eqn}
\left(\beta u_x\right)_x=f(x), \qquad\mbox{ for }x\in\Omega\setminus\Gamma,
\end{equation}
where $\beta=\beta(x)$ and $f(x)$ can be discontinuous across interface points $x_I\in\Gamma$.
The jump conditions across the interface are given as
\begin{equation}\label{jumpcond}
\begin{array}{llllll}
[u]=u^+-u^-=a(x), &\mbox{ for }x\in \Gamma, \\
{[\beta u_x]}=\beta^+ u_x^+-\beta^- u_x^-=b(x), &\mbox{ for }x\in \Gamma.
\end{array}
\end{equation}
We focus here on the case of two subdomains and one interface point,
as it is straightforward to extend the methods for cases with
more subdomains and interface points.

As an alternative formulation of the problem, one could incorporate the jump conditions
(\ref{jumpcond})
into the differential itself by adding singular sources to the right-hand side of the equation.
In such a formulation, the differential equation would take the form
$(\beta u_x)_x = f(x)+b_I\delta(x-x_I)+a_I\bar{\beta}\delta'(x-x_I)$,
where $\bar{\beta}=(\beta^+ + \beta^-)/2$ and $a_I=a(x_I)$ and $b_I=b(x_I)$,
and where this differential equation is valid over the entire domain $\Omega$.
On the other hand, the differential equation in (\ref{1D_eqn}) is valid only within each of the separate regions
$\Omega^+$ and $\Omega^-$, and the jump conditions in (\ref{jumpcond}) are needed to
connect the solutions in $\Omega^+$ and $\Omega^-$ and complete the problem specification.
It will be convenient here to use the separate formulation in
(\ref{1D_eqn})--(\ref{jumpcond})
throughout the paper.

\subsubsection{Finite differences}\label{1DFiniteDiff}
A second-order finite-difference method can be 
derived on a Cartesian grid, with a symmetric operator, in the following way. 

First, if the interface $\Gamma=\{x_I\}$ does not intersect with the grid edges connecting the three points $x_{i-1}$, $x_i$, and $x_{i+1}$, then we call $x_i$ a standard Cartesian point. For all the standard Cartesian points we follow the standard second-order discretization for (\ref{1D_eqn}):
\begin{equation}\label{standarddiscretization}
\frac{\beta_{i+\frac{1}{2}} \left(\frac{u_{i+1}-u_i}{\Delta x}\right)-\beta_{i-\frac{1}{2}} \left(\frac{u_i-u_{i-1}}{\Delta x}\right)}{\Delta x}=f_i+O(\Delta x^2).
\end{equation}

Next, consider nonstandard Cartesian points, such as $x_i$ and $x_{i+1}$ with an interfacial 
point $x_I\in \Gamma$ in between and with $x_i\in \Omega^-$ and $x_{i+1}\in \Omega^+$,
as shown in Fig.~\ref{1-D_FD}.
Since the number of nonstandard points is assumed to be small,
it should be possible to have an overall second-order-accurate method
that locally uses a first-order discretization at nonstandard points.
Therefore, we use a first-order discretization of $(\beta u_x)_x$,
\begin{equation}
(\beta u_x)_x(x_i)
=
\frac{\beta(x_{m-}) u_x(x_{m-})-\beta(x_{i-\frac{1}{2}}) u_x(x_{i-\frac{1}{2}})}{x_{m-}-x_{i-\frac{1}{2}}}
+O(\Delta x),
\end{equation}
followed by second-order discretizations of the $u_x$ terms, which lead to
\begin{equation}\label{primitive_uxx}
\frac{\beta_{m-}\frac{u_{I-}-u_i}{(1-\theta)\Delta x}-\beta_{i-\frac{1}{2}}\frac{u_i-u_{i-1}}{\Delta x}}{\frac{2-\theta}{2}\Delta x}
=f_i+O(\Delta x),
\end{equation}
where $\theta=(x_{i+1}-x_I)/\Delta x$.
Note that the midpoints $x_{m-}=(x_i+x_I)/2$ and $x_{m+}=(x_I+x_{i+1})/2$, 
illustrated in
Fig.~\ref{1-D_FD}, 
are useful here to allow second-order discretizations of $u_x$.

\begin{figure}[!htbp]
    \centering
        \includegraphics[width=0.8\textwidth]{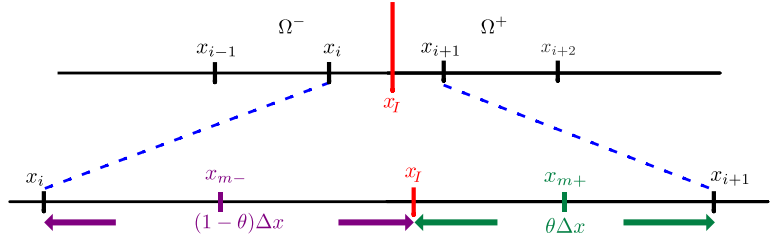}
        \caption{Cartesian grid points and an interfacial point in between.}\label{1-D_FD}
\end{figure}

The final step is to replace in (\ref{primitive_uxx}) the appearance of the interface value $u_{I-}$ with Cartesian values and adjustments consisting of known quantities. To do this, we obtain additional equations by discretizing (\ref{1D_eqn}) at 
$x_{I-}$ and $x_{I+}$, the left and right limit of $x_I$,
using a method similar to the one above:
\begin{equation}
\mbox{At } x_{I+}: \quad \frac{\beta_{m+}\frac{u_{i+1}-u_{I+}}{\theta \Delta x}-\beta_{I+}u_x(x_{I+})}{\theta \Delta x/2}=f_{I+}+O(\Delta x)\label{zpeqn}
\end{equation}
and 
\begin{equation}
\mbox{At } x_{I-}: \quad \frac{\beta_{I-}u_x(x_{I-})-\beta_{m-}\frac{u_{I-}-u_i}{(1-\theta) \Delta x}}{(1-\theta) \Delta x/2}=f_{I-}+O(\Delta x).\label{zmeqn}
\end{equation}
The non-Cartesian unknowns $u_x(x_{I\pm})$ and $u_{I\pm}$ above can now be replaced by Cartesian unknowns by the following two steps.
First, the weighted sum $(\theta\Delta x/2)\cdot(\ref{zpeqn})+((1-\theta)\Delta x/2)\cdot(\ref{zmeqn})$ is a combination that
produces the jump $[\beta u_x]$:
\begin{equation}\label{int_cond}
  	\begin{gathered}
\beta_{m+}\left(\frac{u_{i+1}-u_{I+}}{\theta\Delta x}\right)-\beta_{m-}\left(\frac{u_{I-}-u_i}{(1-\theta)\Delta x}\right)-[\beta u_x]\hfill \\
=\Big(\theta\cdot f_{I+}+(1-\theta)\cdot f_{I-}\Big)\frac{\Delta x}{2}+O(\Delta x^2).
	\end{gathered}
\end{equation}

Second, by using the jump conditions (\ref{jumpcond}),
we see that (\ref{int_cond}) can be rewritten as our desired formula for
replacing $u_{I-}$ by Cartesian $u$ values:
\begin{equation}\label{uintformula}
\begin{aligned}
u_{I-}
&=\frac{\hat\beta(1-\theta)}{\beta_{m-}}u_{i+1}
+\frac{\hat\beta\theta}{\beta_{m+}}u_{i}\hfill\\
&-\frac{\hat\beta \theta(1-\theta)\Delta x^2}{\beta_{m+}\beta_{m-}}\left(\frac{\beta_{m+}a_I}{\theta\Delta x^2}+\frac{b_I}{\Delta x}+\frac{1}{2}\Big(\theta\cdot f_{I+}+(1-\theta)\cdot f_{I-}\Big)\right),
\end{aligned}
\end{equation}
where 
\begin{equation}
\hat\beta=\frac{\beta_{m+}\beta_{m-}}{(1-\theta)\cdot\beta_{m+}+\theta\cdot\beta_{m-}}.
\end{equation}
Lastly, substituting (\ref{uintformula}) into (\ref{primitive_uxx})
yields a first-order discretization of the differential equation at $x_i$, 
in terms of only Cartesian values of $u$:
\begin{eqnarray}\label{symmetric_uxx}\notag
&&\frac{1}{\Delta x^2}\left(\beta_{i-\frac{1}{2}}\cdot u_{i-1}-\left(\beta_{i-\frac{1}{2}}+\hat\beta\right)u_i+\hat\beta\cdot u_{i+1}\right)\\
&&\qquad 
= f_i\cdot\left(\frac{2-\theta}{2}\right)
+\frac{\hat\beta \theta}{\beta_{m+}}\left(\frac{\beta_{m+}}{\theta}\frac{a_I}{\Delta x^2}+\frac{b_I}{\Delta x}+\frac{1}{2}\Big(\theta\cdot f_{I+}+(1-\theta)\cdot f_{I-}\Big)\right).
\end{eqnarray}
For the neighboring nonstandard point at $x_{i+1}$, one can derive
a similar finite difference formula:
\begin{eqnarray}
&& \frac{1}{\Delta x^2}\left(\hat\beta\cdot u_{i}-\left(\hat\beta+\beta_{i+\frac{3}{2}}\right)u_{i+1}+\beta_{i+\frac{3}{2}}\cdot u_{i+2}\right)= f_{i+1}\cdot\left(\frac{1+\theta}{2}\right)
\nonumber \\
&&\qquad 
+\frac{\hat\beta (1-\theta)}{\beta_{m-}}\left(-\frac{\beta_{m-}}{(1-\theta)}\frac{a_I}{\Delta x^2}+\frac{b_I}{\Delta x}+\frac{1}{2}\Big(\theta\cdot f_{I+}+(1-\theta)\cdot f_{I-}\Big)\right).
\label{symmetric_uxx2}
\end{eqnarray}
Comparing (\ref{symmetric_uxx}) and (\ref{symmetric_uxx2}), it is clear that the difference operator acting on $u$ is symmetric.
The linear system can be solved using many standard efficient methods.

Note that this method in (\ref{symmetric_uxx})--(\ref{symmetric_uxx2})
looks similar to the GFM, which is first-order accurate
\cite{lfk00,ls03mc},
but (\ref{symmetric_uxx})--(\ref{symmetric_uxx2})
include important differences that render this method
second-order accurate.
For instance, the right-hand-side terms in
(\ref{symmetric_uxx})--(\ref{symmetric_uxx2})
have coefficients that are different from the GFM
and that arise here as part of a systematic finite-differences
derivation. Also, the values of $\beta$ at the midpoints
$x_{m-}$ and $x_{m+}$ 
were needed for the present method,
whereas $\beta$ values at the interface and Cartesian grid points and Cartesian midpoints
are utilized in the GFM
\cite{lfk00,ls03mc}.

In comparison to the IIM
\cite{ll94},
notice that the present method has a symmetric operator,
whereas the IIM operator is non-symmetric.
Also, the derivation of the IIM requires taking derivatives
of jump conditions, whereas the present method is derived
by simply applying finite difference formulas to the differential equation.

To summarize, the basic idea in deriving 
(\ref{symmetric_uxx})--(\ref{symmetric_uxx2})
was to (i) start with midpoint-based finite differences
using both Cartesian points and interface points,
and then (ii) use the jump conditions to eliminate the
interface values $u_{I\pm}$ from the system.

\subsubsection{Proof of second-order convergence}
\label{sec:proof}

\vspace{12pt}
\begin{theorem}
The numerical solution in~\S\ref{1DFiniteDiff} converges to the exact 
solution in the $L^2$ norm with second-order accuracy:
$||\mathbf{U}-\mathbf{U}_{ex}||_2=O(\Delta x^2)$.
\end{theorem}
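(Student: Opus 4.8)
The plan is to follow the classical Lax-type argument: consistency plus stability implies convergence. Let $\mathbf{U}$ be the vector of computed grid values and $\mathbf{U}_{ex}$ the vector of exact solution values at the grid points. Write the discrete system as $A_{\Delta x}\mathbf{U} = \mathbf{F}_{\Delta x}$, where $A_{\Delta x}$ is the symmetric matrix assembled from the standard stencil \eqref{standarddiscretization} at Cartesian points and from \eqref{symmetric_uxx}--\eqref{symmetric_uxx2} at the (at most two) nonstandard points. The first step is to quantify the local truncation error $\boldsymbol{\tau} = A_{\Delta x}\mathbf{U}_{ex} - \mathbf{F}_{\Delta x}$. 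From \eqref{standarddiscretization} we get $\tau_i = O(\Delta x^2)$ at standard points; from \eqref{symmetric_uxx}--\eqref{symmetric_uxx2}, because those equations are only first-order consistent, we get $\tau_i = O(\Delta x)$ at the nonstandard points. Crucially, however, when these stencils are multiplied by $\Delta x^2$ the nonstandard equations carry an error of size $O(\Delta x^3)$ in the "unscaled" form, i.e. $\tau_i = O(\Delta x)$ appears only at $O(1)$ many nodes. I would therefore record two facts: $\|\boldsymbol{\tau}\|_\infty = O(\Delta x)$ but it is large only on a vanishingly small fraction of nodes, so $\|\boldsymbol{\tau}\|_2 = O(\Delta x)\cdot O(1) = O(\Delta x)$ while $\|\boldsymbol{\tau}\|_1 = O(\Delta x^2) + O(\Delta x)\cdot O(1)\cdot$(spacing), and more to the point the contribution of the bad nodes, when passed through the inverse operator, should only cost one extra power of $\Delta x$.

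Next I would establish the stability estimate. Since $A_{\Delta x}$ is symmetric, the natural route is to show that $A_{\Delta x}$ (suitably scaled) is uniformly bounded below, i.e. there is a constant $C$ independent of $\Delta x$ with $\|A_{\Delta x}^{-1}\|_{2} \le C$, or better, a discrete maximum-principle / energy estimate of the form $\|\mathbf{V}\|_2 \le C\,\|A_{\Delta x}\mathbf{V}\|_{2}$ for all grid functions $\mathbf{V}$ vanishing on the boundary (with the right scaling this is really $\|A_{\Delta x}\mathbf V\|$ having units of a second derivative). The cleanest approach is an energy argument: multiply the equation by $\mathbf{V}$, sum by parts, and use that the off-diagonal weights $\beta_{i\pm1/2}$ and $\hat\beta$ are all positive and bounded away from $0$ and $\infty$ (here one uses that $\beta^\pm$ are bounded below by a positive constant and that $\hat\beta$, being a weighted harmonic-type mean of $\beta_{m+},\beta_{m-}$, inherits the same bounds uniformly in $\theta\in(0,1)$), to get a discrete $H^1$ coercivity bound, then invoke a discrete Poincar\'e inequality. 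The symmetry and sign structure established right after \eqref{symmetric_uxx2} is exactly what makes this work and is the advantage over the IIM.

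The main obstacle is the interplay between the two norms in the two steps: the truncation error is only $O(\Delta x)$ at the nonstandard nodes, so a naive $\|\mathbf U-\mathbf U_{ex}\|_2 \le \|A_{\Delta x}^{-1}\|_2\,\|\boldsymbol\tau\|_2$ gives only $O(\Delta x)$, not $O(\Delta x^2)$. To recover the extra power one must exploit that the large truncation error is supported on $O(1)$ nodes. The standard device is a duality / discrete Green's function argument: write the error at a fixed node as an inner product of $\boldsymbol\tau$ against a column of $A_{\Delta x}^{-1}$, and show that the relevant entries of the discrete Green's function corresponding to the nonstandard nodes are themselves $O(\Delta x)$ small (because a point source at a single node, after inversion of a second-difference operator, produces a Green's function whose value is $O(\Delta x)$ in the $\ell^\infty\to\ell^\infty$ sense, reflecting the continuous Green's function being bounded while the source has mass $O(\Delta x)$ in the relevant scaling). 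Concretely: decompose $\boldsymbol\tau = \boldsymbol\tau_{\mathrm{std}} + \boldsymbol\tau_{\mathrm{non}}$; bound $A_{\Delta x}^{-1}\boldsymbol\tau_{\mathrm{std}}$ by $O(\Delta x^2)$ using $\|\boldsymbol\tau_{\mathrm{std}}\|_2 = O(\Delta x^2)$ and uniform stability; bound $A_{\Delta x}^{-1}\boldsymbol\tau_{\mathrm{non}}$ using that $\boldsymbol\tau_{\mathrm{non}}$ has only $O(1)$ nonzero entries each of size $O(\Delta x)$, so $\|\boldsymbol\tau_{\mathrm{non}}\|_1 = O(\Delta x)$, combined with a bound $\|A_{\Delta x}^{-1}\|_{\ell^1\to\ell^2} = O(\Delta x)$ (equivalently $\|A_{\Delta x}^{-1}\|_{\ell^2\to\ell^\infty}=O(\Delta x)$) that follows from the discrete Green's function estimate and the scaling $A_{\Delta x}\sim \Delta x^{-2}\times(\text{second difference})$. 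Summing the two contributions gives $\|\mathbf U-\mathbf U_{ex}\|_2 = O(\Delta x^2)$. I expect the bookkeeping of these mixed-norm operator bounds, and verifying the uniform-in-$\theta$ positivity and boundedness of $\hat\beta$ and the midpoint $\beta$ values, to be the technical heart of the proof, while the overall structure is the familiar consistency + stability $\Rightarrow$ convergence template.
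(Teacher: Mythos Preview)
Your overall strategy---consistency plus stability via an energy/Poincar\'e argument exploiting the symmetry and uniform positivity of the weights $\beta_{i\pm1/2}$ and $\hat\beta$---matches the paper's. Where you diverge is in the handling of $\boldsymbol{\tau}_{ns}$. The paper does \emph{not} invoke any Green's-function or mixed-norm argument; it simply splits $\boldsymbol{\tau}=\boldsymbol{\tau}_s+\boldsymbol{\tau}_{ns}$, asserts $\|\boldsymbol{\tau}_{ns}\|_2=O(\Delta x^2)$ on the grounds that each nonstandard entry is $O(\Delta x)$ while the \emph{fraction} of nonstandard points is $O(\Delta x)$, and concludes directly via $\|\mathbf{e}\|_2\le\|A^{-1}\|_2\,\|\boldsymbol{\tau}\|_2$. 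As you correctly flag, that counting argument in the (grid-scaled) $L^2$ norm actually gives only $\|\boldsymbol{\tau}_{ns}\|_2=O(\Delta x^{3/2})$ (the ``fraction $O(\Delta x)$'' reasoning is an $L^1$ computation, not an $L^2$ one), so the paper's proof, read literally, delivers $O(\Delta x^{3/2})$ rather than the stated $O(\Delta x^2)$. Your duality/discrete-Green's-function step---using that the $O(1)$ bad entries of $\boldsymbol{\tau}_{ns}$, when pushed through $A^{-1}$, gain an extra power of $\Delta x$ because the columns of $A^{-1}$ are uniformly bounded discrete Green's functions---is precisely the standard device (cf.\ the Beale--Layton analysis the paper itself cites for the 2D/3D case) needed to close this gap. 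In short: the paper's route is shorter but leaves open exactly the step you worried about; your longer route is the one that actually reaches $O(\Delta x^2)$. One minor caution: be careful with the exact exponent in your claimed bound $\|A^{-1}\|_{\ell^1\to\ell^2}=O(\Delta x)$, which depends on whether your $\ell^p$ norms are grid-weighted or not; the cleanest version is the $\ell^\infty$ statement that a single column of $A^{-1}$ has entries of size $O(\Delta x)$, from which the $L^2$ error bound follows.
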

\begin{proof}
The setup of the proof is as follows.
The numerical method in (\ref{standarddiscretization}), (\ref{symmetric_uxx}) and (\ref{symmetric_uxx2}) can be written in matrix-vector form as
$A\mathbf{U}=\mathbf{F}$,
and the exact solution satisfies
$A\mathbf{U}_{ex}=\mathbf{F}+\boldsymbol{\tau}$,
where $\boldsymbol{\tau}$ is the local truncation error.
The error $\mathbf{e}=\mathbf{U}-\mathbf{U}_{ex}$ then satisfies
$A\mathbf{e}=-\boldsymbol{\tau}$,
and solving for $\mathbf{e}$ gives
$\mathbf{e}=-A^{-1}\boldsymbol{\tau}$.
The $L^2$ norm of the error then satisfies
\begin{equation}
\|\mathbf{e}\|_2=\|A^{-1}\boldsymbol{\tau}\|_2 \le \|A^{-1}\|_2 \|\boldsymbol{\tau}\|_2,
\label{eqn:convergence-proof}
\end{equation}
where the remaining task is to analyze $\|A^{-1}\|_2$ and $\|\boldsymbol{\tau}\|_2$ for small $\Delta x$.

Consistency was established in~\S\ref{1DFiniteDiff}.
Specifically, the local truncation error can be written as
\begin{equation}
\boldsymbol{\tau}=\boldsymbol{\tau}_s+\boldsymbol{\tau}_{ns},
\quad\mbox{with}\quad
\|\boldsymbol{\tau}_s\|_2=O(\Delta x^2),
\quad
\|\boldsymbol{\tau}_{ns}\|_2=O(\Delta x^2),
\label{eqn:consistency-proof}
\end{equation}
where we have split $\boldsymbol{\tau}$ so that
the elements of $\boldsymbol{\tau}_s$ are nonzero only at
standard points and the elements of $\boldsymbol{\tau}_{ns}$
are nonzero only at non-standard points.
The $O(\Delta x^2)$ scaling in (\ref{eqn:consistency-proof}) 
is then true because
each element of $\boldsymbol{\tau}_s$ is $O(\Delta x^2)$, based on the finite difference formulas
at the standard points;
and each element of $\boldsymbol{\tau}_{ns}$ is $O(\Delta x)$, 
but the fraction of
non-standard points is $O(\Delta x)$, so $\|\boldsymbol{\tau}_{ns}\|_2=O(\Delta x^2)$.

Stability is established by the bound
\begin{equation}
\|A^{-1}\|_2\le \frac{|\Omega|^2}{\beta_m},
\label{eqn:stability-proof}
\end{equation}
where 
$|\Omega|$ is the total length of the domain and $\beta_m=\min_{x\in\Omega} \beta(x)$ is a constant that is independent of $\Delta x$,
and it is assumed that $\beta(x)>0$ for all $x$.
The proof of this bound is well-known
\cite{shlui12}
and is based on summation by parts and discrete Poincar\'e-Friedrichs inequality.

The proof of the theorem
is completed by combining the consistency and stability results in
(\ref{eqn:consistency-proof}) and (\ref{eqn:stability-proof})
to show that (\ref{eqn:convergence-proof}) is $O(\Delta x^2)$.
\end{proof}

Note that we have no such proof in 
two- or three-dimensional space, although proofs for 2D and 3D have been presented for
similar methods
\cite{bl06},
and numerical examples below demonstrate second-order convergence.

\subsection{Two dimensions}
\label{sec:method-2d}

Now consider the two-dimensional Poisson equation
\begin{equation}\label{2deqn}
(\beta u_x)_x+(\beta u_y)_y=f(x,y) \qquad \mbox{for} \quad \Omega\setminus\Gamma,
\end{equation}
where $\Omega=\Omega^+\cup\Omega^-\cup\Gamma$ and $\Gamma$ is the interface between the
sets $\Omega^+$ and $\Omega^-$. 
With ${\bf n}=(n^1(x,y),n^2(x,y))$ as the unit normal along $\Gamma$, 
the interface jump conditions are given as
\begin{equation}\label{2djumpcond}
\begin{array}{llllll}
[u]=u^+-u^-=a({\bf x}), &\mbox{ for }{\bf x}\in \Gamma, \\
{[\beta u_n]}=\beta^+ u_n^+-\beta^- u_n^-=b({\bf x}), &\mbox{ for }{\bf x}\in \Gamma,
\end{array}
\end{equation}
where $u_n=\mathbf{n}\cdot\nabla u$
is the derivative of $u$ in the direction of the normal vector. 

\subsubsection{Finite differences}
\label{sec:fin-diff-2d}

The goal of this section is to extend the ideas of the 1D case of \S\ref{sec:method-1d}
to the 2D case of
(\ref{2deqn})--(\ref{2djumpcond})
and arrive at a second-order finite-difference method.
Similar to the 1D case, we call a Cartesian point $(x_i,y_j)$
a standard point if this point and its nearest neighbors all lie within
$\Omega^+$ or all lie within $\Omega^-$.
For standard points, (\ref{2deqn}) is discretized with the standard, second-order, 
5-point finite-difference formula. 
For non-standard points, on the other hand, 
the interface must be taken into account.

\begin{figure}[!htbp]
    \centering
        \includegraphics[width=0.5\textwidth]{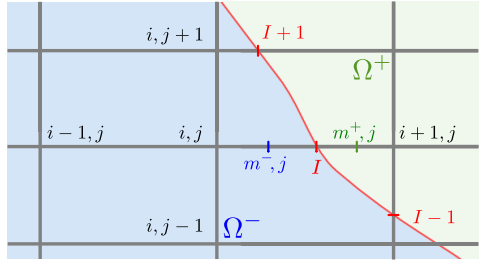}
        \caption{Non-standard grid point at $(x_i,y_j)$.}
         \label{fig:2dGrid}
\end{figure}

For nonstandard points, 
such as point $(x_i,y_j)$ illustrated in Fig.~\ref{fig:2dGrid},
we obtain a first-order discretization
by using similar ideas as in the 1D case.
Following a derivation similar to (\ref{primitive_uxx})--(\ref{symmetric_uxx}),
by essentially just replacing $f$ by $f-(\beta u_y)_y$,
we arrive at 
\begin{equation}
\begin{aligned}
 &\frac{1}{\Delta x^2}\left(\beta_{i-\frac{1}{2},j}\cdot u_{i-1,j}-\left(\beta_{i-\frac{1}{2},j}+\hat\beta\right)u_{i,j}+\hat\beta\cdot u_{i+1,j}\right)
\\
+ &\frac{1}{\Delta y^2}\left(\beta_{i,j-\frac{1}{2}}\cdot u_{i,j-1}-\left(\beta_{i,j-\frac{1}{2}}+\beta_{i,j+\frac{1}{2}}\right)u_{i,j}+\beta_{i,j+\frac{1}{2}}\cdot u_{i,j+1}\right)
\\
=&
f_{i,j}\cdot\left(\frac{2-\theta}{2}\right)
+(\beta u_y)_y(x_i,y_j)\cdot\frac{\theta}{2}
+F^x_{cor}+O(\Delta x),
\label{symmetric_2d_uxx}
\end{aligned}
\end{equation}
where 
\begin{equation}
\hat\beta=\frac{\beta(x_{m+},y_j)\cdot\beta(x_{m-},y_j)}{(1-\theta)\cdot\beta(x_{m+},y_j)+\theta\cdot\beta(x_{m-},y_j)},
\label{eqn:betahat}
\end{equation}
and
\begin{equation}
\begin{aligned}
F^x_{cor}&=
\frac{\hat\beta \theta}{\beta(x_{m+},y_j)}
\Bigg\{
\frac{\beta(x_{m+},y_j)a(x_I,y_j)}{\theta \Delta x^2}+\frac{[\beta u_x]}{\Delta x}\hfill \\
&+\frac{1}{2}\Big(\theta\cdot\left( f-(\beta u_y)_y\right)(x_{I+},y_j)+(1-\theta)\cdot\left( f-(\beta u_y)_y\right)(x_{I-},y_j)\Big)\Bigg\}.
\label{F_{cor}rection}
\end{aligned}
\end{equation}
This finite-difference formula has a left-hand side with the desirable property of a
symmetric operator,
as in the 1D case. However, the right-hand side of (\ref{symmetric_2d_uxx})
now depends on the solution $u$ itself, so an iterative method
will be described below for finding a solution.

Also, a more general case would allow for other interface crossings, such as 
a crossing at point
$(x_i,y_{J})$, with $y_j<y_J<y_{j+1}$,
which would generate some slight modifications to the derivation and finite-difference
formula. Since the more general case is only slightly different from
(\ref{symmetric_2d_uxx}),
it is relegated to 
appendix \ref{app:2D-two-crossings}.

To estimate the derivatives on the right-hand side of (\ref{symmetric_2d_uxx}),
simple finite differences are used.
For the term $(\beta u_y)_y(x_i,y_j)$,
standard centered differences can be used with the points
$(x_i,y_{j-1}), (x_i,y_j)$, and $(x_i,y_{j+1})$.
For the term $(\beta u_y)_y(x_{I-},y_j)$ at the interface, from (\ref{F_{cor}rection}),
one can approximate it with the nearby Cartesian value $(\beta u_y)_y(x_i,y_j)$
with an acceptable error of $O(\Delta x)$, and then one can use a standard centered discretization
with the points $(x_i,y_{j-1}), (x_i,y_j)$, and $(x_i,y_{j+1})$.
The term $(\beta u_y)_y(x_{I+},y_j)$ can be handled similarly by using $(\beta u_y)_y$ at
the nearby Cartesian point $(x_{i+1},y_j)$.
Lastly, the jump $[\beta u_x]$ from (\ref{F_{cor}rection}) can be written
in terms of normal and tangential jumps as
\begin{eqnarray}
[\beta u_x]
&=& [\beta u_n]n^1-[\beta u_\tau]n^2 
\nonumber \\
&=& b_I n^1-[\beta u_\tau]n^2.
\label{eqn:beta-ux-jump-2d}
\end{eqnarray}
The term $[\beta u_\tau]$ can then be estimated using finite differences
with $u$ values from the interface points labeled $I-1, I$, and $I+1$ in
Fig.~\ref{fig:2dGrid}
(or possibly using another triplet, say $I-2, I$, and $I+1$, if the two interface points 
$I-1$ and $I$ are located too close together, such as within $O(h^2)$ distance). 
Note that a second-order finite-difference formula is needed for $[\beta u_\tau]$
in order for the term $[\beta u_x]/\Delta x$ to have an error of $O(\Delta x)$.
To determine the $u$ values at the interface points, 
one can use the formula
\begin{equation}
\begin{aligned}
u(x_{I-},y_{j}) 
= 
\dfrac{(1-\theta)\hat\beta}{\beta(x_{m-},y_j)}u_{i+1,j}+\dfrac{\theta\hat\beta}{\beta(x_{m+},y_j)}u_{i,j}
-\dfrac{\hat\beta(1-\theta)\theta \Delta x^2}{\beta(x_{m+},y_j)\beta(x_{m-},y_j)}
\\
\cdot\Bigg(\dfrac{\beta(x_{m+},y_j)a_I}{\theta  \Delta x^2}+\frac{[\beta u_x]}{\Delta x}
+\frac{\theta}{2}\cdot((\beta u_{x})_x)_{i+1,j}+\frac{(1-\theta)}{2}\cdot((\beta  u_{x})_x)_{i,j}
\Bigg),
\label{eqn:uI-2d}
\end{aligned}
\end{equation}
and $u(x_{I_+},y_j)=u(x_{I_-},y_j)+a(x_I,y_j)$ by the jump condition (\ref{2djumpcond}).
This formula arises as part of the derivation of (\ref{symmetric_2d_uxx})
and is similar to the 1D case, and formulas for $u(x_i,y_{J_\pm})$ can be obtained similarly 
if the crossing is in the $y$-direction.
Note that this formula in 2D does not actually provide the desired result of the interface 
$u$ value in terms of the Cartesian $u$ values, since the right-hand side depends on interface $u$ values
via the $[\beta u_x]$ term. Nevertheless, this formula can be used as part of an iterative
procedure to complete the specification of the numerical methods.

\subsubsection{Iterative methods}
\label{sec:2d-iterative}

In this section, a simple iterative method is proposed here for solving the linear system from
\S\ref{sec:fin-diff-2d}. 

Before describing the standard iterative method of the present paper,
consider first a type of Picard iteration:
\begin{equation}
A\mathbf{u}^{[k+1]}=\mathbf{F}^{[k]}.
\label{eqn:picard-u}
\end{equation}
This is an iterative version of the matrix-vector form of the finite difference method,
one row of which is described in (\ref{symmetric_2d_uxx}):
$A$ is the symmetric matrix from the left-hand side,
$\mathbf{u}^{[k+1]}$ is the vector of all Cartesian $u$ values (from iteration $k+1$),
and $\mathbf{F}^{[k]}$ is the vector from the right-hand-side terms.
The basic idea is to iteratively update $\mathbf{F}^{[k]}$ on the right-hand side
as new, more accurate information about $\mathbf{u}^{[k]}$ is obtained.
As an initial condition, $\mathbf{F}^{[0]}$ is defined as the right-hand side of
(\ref{symmetric_2d_uxx}) with all instances of $u$ ignored,
and the first solution $\mathbf{u}^{[1]}$ is found by solving
$A\mathbf{u}^{[1]}=\mathbf{F}^{[0]}$.
As a result, \textit{the solution $\mathbf{u}^{[1]}$ at the first iteration is 
essentially the same as the GFM solution
\cite{lfk00,ls03mc}
and is therefore a first-order accurate solution.}
It can be used to estimate the interface $u$ values,
which we assemble abstractly into a vector $\mathbf{u}_I^{[k]}$
and update iteratively as
$\mathbf{u}_I^{[k+1]}=B\mathbf{u}_I^{[k]}+C\mathbf{u}^{[k+1]}+\mathbf{G}$,
one row of which is described by (\ref{eqn:uI-2d}):
the $B\mathbf{u}_I^{[k]}$ corresponds to the $[\beta u_\tau]$ term,
the $C\mathbf{u}^{[k+1]}$ corresponds to all terms with Cartesian $u$ values,
and the $\mathbf{G}$ corresponds to the jump terms involving $a_I$ and $b_I$.
An initial interface value of $\mathbf{u}_I^{[0]}=\mathbf{0}$
is used, consistent with the idea of ignoring all instances of $u$
in the initial condition $\mathbf{F}^{[0]}$. 
The second iteration then proceeds by defining $\mathbf{F}^{[1]}$
based on the right-hand side of (\ref{symmetric_2d_uxx})
and now using $\mathbf{u}^{[1]}$ and $\mathbf{u}_I^{[1]}$
to provide a more accurate estimate of the true $\mathbf{F}$ value.
The solution $\mathbf{u}^{[2]}$ at the second iteration is then found from solving
the symmetric system $A\mathbf{u}^{[2]}=\mathbf{F}^{[1]}$.
This procedure can be repeated to iteratively estimate the solution
of the finite-difference method.

For the stopping criterion for the iterative procedure, the differences
$u^{[k]}_{d}=\|\mathbf{u}^{[k+1]}-\mathbf{u}^{[k]}\|_\infty$
and
$F^{[k]}_{d}=\|\mathbf{F}^{[k+1]}-\mathbf{F}^{[k]}\|_\infty$
are monitored. 
When $k$ is large enough so that $u^{[k]}_{d}<h^2$, where $h=\Delta x=\Delta y$,
one can presumably stop iterating since the iterations are producing only small corrections
that are within the desired $O(h^2)$ accuracy of the numerical solution.
As our standard stopping criterion, in addition to $u^{[k]}_{d}<h^2$
we also require $F^{[k]}_{d}<h$ in order to ensure that the estimated
right-hand-side terms are not significantly changing at any location.
Note that, while this standard stopping criterion was chosen with solution accuracy 
as the main consideration, one could also imagine other stopping criteria that
consider computational efficiency or other factors; some other stopping criteria
are explored in \S\ref{sec:stopping}.

As the standard iterative method used here, a modification of Picard iteration is actually used.
While Picard iteration does work well in many cases, we found that it diverges in some cases.
Nevertheless, by making some slight modifications, a robust method can be designed.
Our standard iterative method here uses a simple relaxation procedure to extend Picard iteration;
it is described in \ref{relaxation}, and it is shown below to provide robust results.

\subsection{Three dimensions}
\label{sec:method-3d}

The three-dimensional Poisson equation is
\begin{equation}\label{3deqn}
(\beta u_x)_x+(\beta u_y)_y+(\beta u_z)_z=f(x,y,z), \qquad \mbox{for} \quad \Omega\setminus\Gamma,
\end{equation}
where $\Omega=\Omega^+\cup\Omega^-\cup\Gamma$ and $\Gamma$ is a surface that marks the interface between the
sets $\Omega^+$ and $\Omega^-$. 
The interface jump conditions are given as in the 2D case in (\ref{2djumpcond}).

The 3D discretization is essentially the same as in the 2D case in \S\ref{sec:method-2d}. 
We note one difference that arises: in 3D, the jump $[\beta u_x]$ from (\ref{eqn:beta-ux-jump-2d})
takes the form
\begin{eqnarray}
[\beta u_x] 
&=& [\beta u_n]c^0+[\beta u_{\tau_1}]c^1+[\beta u_{\tau_2}]c^2
\nonumber \\
&=& b_I c^0+[\beta u_{\tau_1}]c^1+[\beta u_{\tau_2}]c^2,
\label{eqn:beta-ux-jump-3d}
\end{eqnarray}
where $\hat{\mathbf{x}}=c^0\hat{\mathbf{n}}+c^1\hat{\boldsymbol{\tau}}_1+c^2\hat{\boldsymbol{\tau}}_2$
was used to write the unit coordinate vector $\hat{\mathbf{x}}$ in terms of the
interface normal vector $\hat{\mathbf{n}}$ 
and two unit vectors $\hat{\boldsymbol{\tau}}_1$ and $\hat{\boldsymbol{\tau}}_2$
from the 2D tangent plane of the interface.
Here, in 3D, note that tangential derivatives are needed in two independent directions in the 2D tangent plane.
The two directions can be conveniently chosen by using the Cartesian coordinate planes.
For example, if  $(x_I, y_j, z_k)\in \Gamma$, 
where $x_I$ is not a Cartesian grid point, then the intersection of surface $\Gamma$ 
and the plane $z=z_k$ can be used to define one direction in the 2D tangent plane,
and the intersection of surface $\Gamma$ and the plane $y=y_j$ can be used to define the other direction.
In this way, computation of the tangential derivatives in 3D can be reduced to essentially the same form
as in 2D.

\section{Examples}
\label{sec:examples}

In this section, second-order convergence is demonstrated through
numerical examples.
In all examples, the same grid spacing is used in each coordinate direction
($\Delta x=\Delta y=\Delta z$),
and the number of grid points in each coordinate direction is $N$,
so the total number of grid points is $N$, $N^2$, or $N^3$
for the 1D, 2D, or 3D cases, respectively.

\subsection{One dimension}

\subsubsection{Example 1D-1}
Consider a domain $\Omega=[0,1]$ separated into two sub-domains $\Omega-=[0,x_I)$ and $\Omega^+=(x_I,1]$, where $x_I=2-\sqrt{2}$. The solution to the one dimensional equation $\beta u_{xx}=f$ is $u^-=\exp(-x)-0.3646x+0.4$ and $u^+=exp(-x)/2+x^2/2+0.5005x$ where $\beta=100$ in $\Omega^-$ and $\beta=200$ in $\Omega^+$, with $f=100\exp(-x)$ in $\Omega^-$ and $f=100\exp(-x)+200$ in $\Omega^+$. The jump conditions connecting the two equations at $x_I$ are $a(x_I)=u^+-u^-=0$ and $b(x_I)=100(2u_x^+-u_x^-)=253.72$.

\begin{figure}[!htbp]
    \centering
        \includegraphics[width=\textwidth]{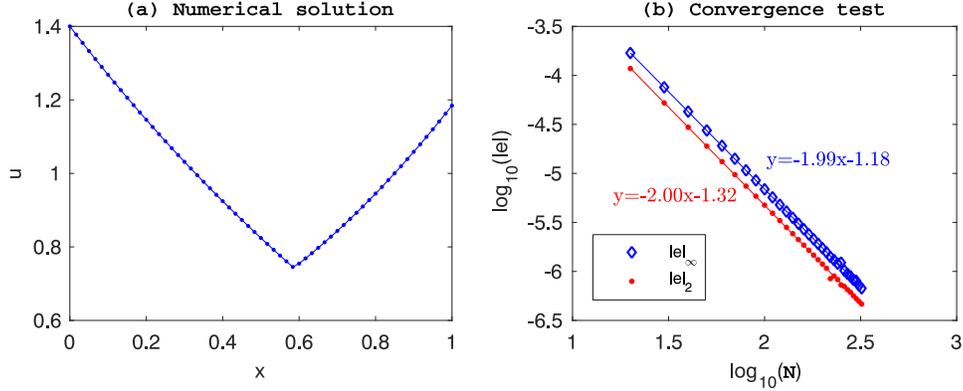}
    \caption{Example 1D-1. (a) Numerical solution with number of grid points $N=61$. (b) Error $\|e\|$ as a function of number of grid points $N$, as a log-log plot including slope of its linear fit.}\label{1d2ndord}
\end{figure}

\subsection{Two dimensions}\label{2DEXs}
The following 2D and 3D examples are tested on some rectangular domain $\Omega$ where $\Omega$ will be divided into $\Omega^+$ and $\Omega^-$ by an interface $\Gamma$. 
It will sometimes be convenient to describe the interface $\Gamma$ in terms of a
level-set function $\phi(x)$ as
$\Gamma=\{{\bf x}\in\Omega:\phi({\bf x})= 0\}$,
where the two sets $\Omega^+$ and $\Omega^-$ can be described as
$\Omega^+=\{{\bf x}\in\Omega:\phi({\bf x})>0\}$
and 
$\Omega^-=\{{\bf x}\in\Omega:\phi({\bf x})< 0\}$.
The coefficients $\beta$ are assumed to be smooth in both $\Omega^+$ and $\Omega^-$, but may have a jump across the interface $\phi$. The piecewise smooth $\beta$ in $\Omega^+$ and $\Omega^-$ will be denoted by $\beta^+$ and $\beta^-$, respectively. As a consequence, the solution $u$ may be discontinuous across $\phi$, but is $\mathscr{C}^2$ in both $\Omega^+$ and $\Omega^-$, and will similarly be denoted by $u^+$ and $u^-$, respectively.

\subsubsection{Example 2D-1: Constant coefficient.}

In this example, we take $\beta$ be a piecewise constant function with
$\beta^-=2$ and $\beta^+=1$,
and the interface is a circle described by the level set function 
$\phi(x,y)=(x-0.5)^2+(y-0.5)^2-0.25^2$.
The solution is
$u^-=\exp(-x^2-y^2)$, $u^+=0$, with $f^-=8(x^2+y^2-1)
\exp( -x^2-y^2 )$, $f^-=0$, 
on the domain $\Omega=[0,1]\times[0,1]$.
Second order convergence can be seen in figure (\ref{2dCirEx1Sol}b).

\begin{figure}[!htbp]
    \centering
        \includegraphics[width=\textwidth]{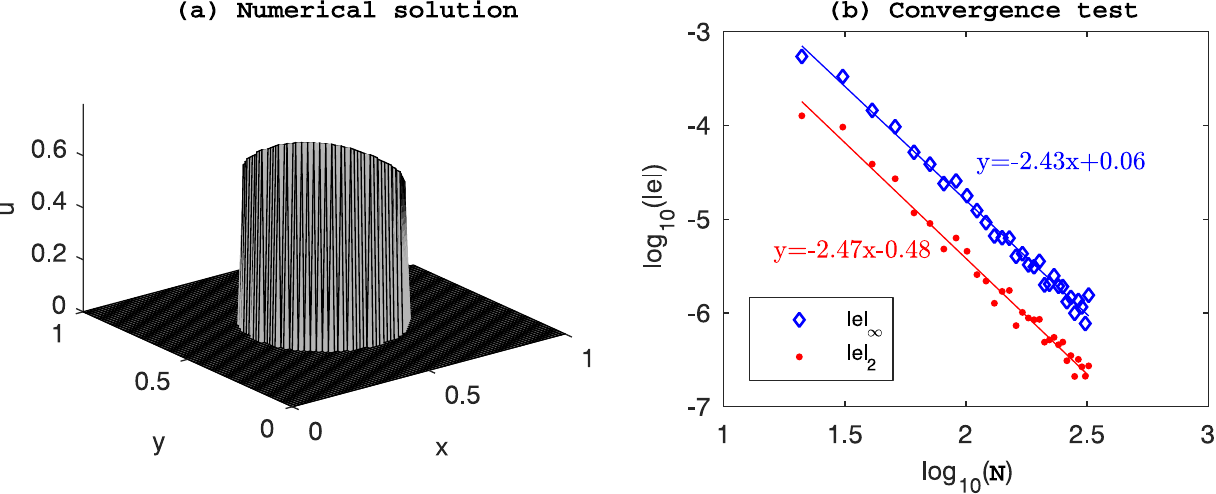}
    \caption{Example 2D-1: constant coefficient. (a): Numerical solution, $N=81$. (b): Error $\|e\|$ as a function of number of grid points in each coordinate direction, $N$, as a log-log plot including slope of its linear fit.}\label{2dCirEx1Sol}
\end{figure}

\subsubsection{Example 2D-2: Variable coefficient.}

The next example we take $\beta$ to be a piecewise smooth function with 
$\beta^-=x^2+y^2+1,$ and $\beta^+=1$
with the same domain and level set function as the previous example. The solution is $u^-=\exp(x^2+y^2),$ and $u^+=\exp(-x^2-y^2)$ and source term is $f^-=4( \beta^-(x^2+y^2+1) + (x^2+y^2) )\exp(x^2+y^2)$, $f^+=4(x^2+y^2-1)exp(-x^2-y^2)$. Error analysis is presented in figure (\ref{2dCirEx4Sol}b).

\begin{figure}[!htbp]
    \centering
        \includegraphics[width=\textwidth]{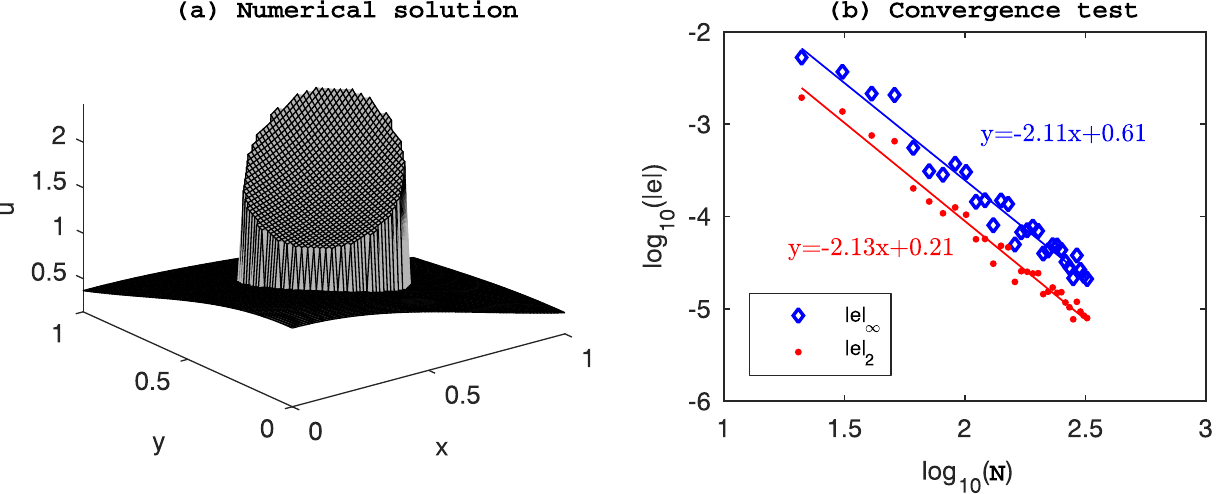}
    \caption{Example 2D-2: variable coefficient. (a): Numerical solution, $N=81$. (b): Error $\|e\|$ as a function of number of grid points in each coordinate direction, $N$, as a log-log plot including slope of its linear fit.}\label{2dCirEx4Sol}
\end{figure}

\subsubsection{Example 2D-3: Variable coefficient.}\label{sec:smoothstar}

With the same solution $u$ in example 2, this example is computed on on a domain $\Omega=[-1,1]\times[-1,1]$, with $\beta^-=x^2+y^2+1$ and $\beta^+=\sqrt{x^2+y^2+2}$, the corresponding $f^-=4( \beta^-(x^2+y^2+1) + (x^2+y^2) )\exp(x^2+y^2)$, $f^+=( 4\beta^+(x^2+y^2-1) - 2(x^2+y^2)/\sqrt{x^2+y^2+2} )\exp(-x^2+y^2)$. The interface is parameterized by
\begin{equation}
\left\{
\begin{array}{llll}
x(t)=0.02\sqrt{5}+(0.5+0.2\sin(5t))\cos(t),\\
y(t)=0.02\sqrt{5}+(0.5+0.2\sin(5t))\sin(t),
\end{array}
\right.
\end{equation}
with $t\in[0,2\pi]$. Second order convergence is demonstrated in figure (\ref{2dSSEx5Sol}b).

\begin{figure}[!htbp]
    \centering
        \includegraphics[width=\textwidth]{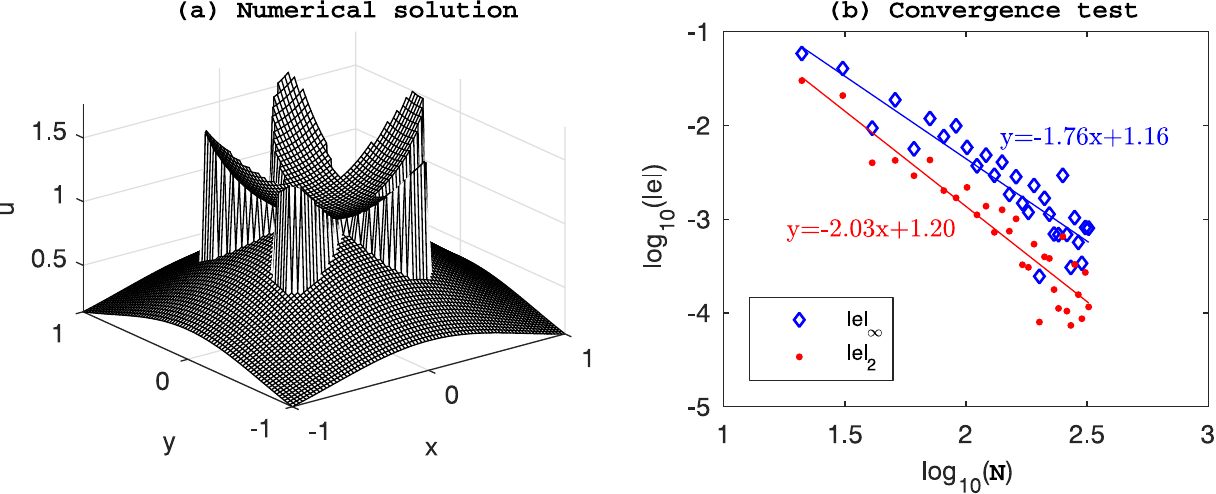}
    \caption{Example 2D-3: variable coefficient. (a): Numerical solution, $N=81$ (b): Error $\|e\|$ as a function of number of grid points in each coordinate direction, $N$, as a log-log plot including slope of its linear fit.}\label{2dSSEx5Sol}
\end{figure}

\subsubsection{Example 2D-4: High-Contrast coefficient cases}\label{sec:2dhctrs}
A series of tests were conducted on the large coefficient ratios, either $\beta^+/\beta^-\ll 1$ or $1\ll \beta^+/\beta^-$. Here we test with $u^-=\exp(x^2+y^2),$ and $u^+=\exp(-x^2-y^2)$ with a circular interface as in Example 2D-1, and $(\beta^+,\beta^-)=(0.02,1)$ and $(20,1)$. Second order convergence can still be obtained (see figure~\ref{2dCirHctrs}).

\begin{figure}[!htbp]
\centering
\includegraphics[width=\textwidth]{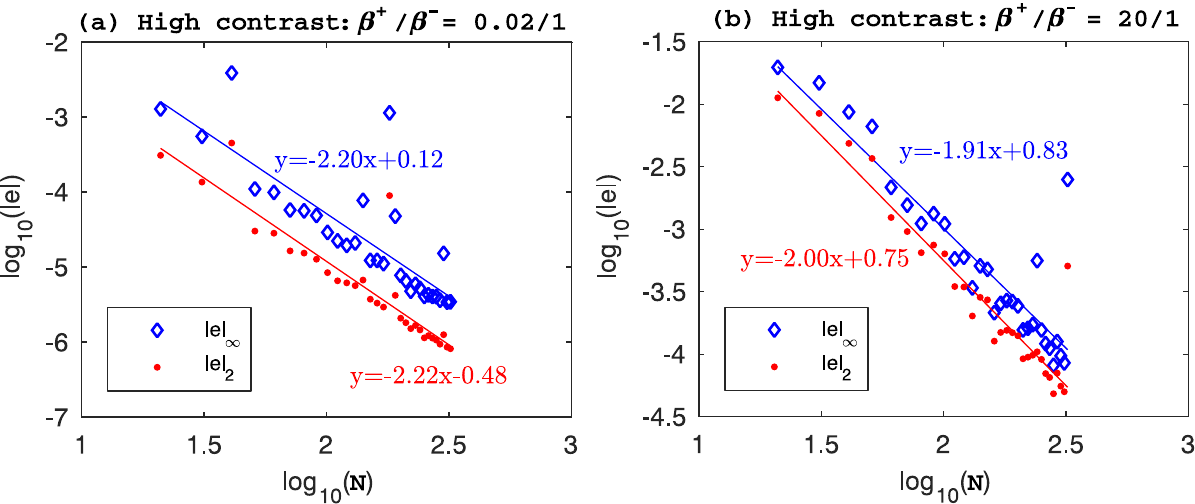}
\caption{Error plots for high contrast case, Example 2D-4. (a) $\beta^+/\beta^-=0.02/1$ (b) $\beta^+/\beta^-=20/1$.}
\label{2dCirHctrs}
\end{figure}

\subsection{Three dimensions}\label{3DEXs}

\subsubsection{Example 3D-1: Variable coefficient with spherical interface.}

On the domain $\Omega=[0,1]\times[0,1]\times[0,1]$, where $\Omega$ is divided into $\Omega^+$ and $\Omega^-$ by a sphere centered at $(0.5,0.5,0.5)$ with radius $0.25$.
The variable coefficients $\beta$ in equation (\ref{3deqn}) are 
$\beta^-=10+\sin(xy+z)$ and $\beta^+=10+\cos(x+yz)$, with solution $u^-=\exp(x^2+y^2+z^2)$ and $u^+=0$ and $f^-=(4\beta^-(x^2+y^2+z^2+3/2)+(4xy+2z)\cos(xy+z))\exp(x^2+y^2+z^2)$, $f^+=0$. See figure \ref{3DSphere} for 
the geometry of the spherical interface and second-order convergence in $L^2$.

\begin{figure}[!htbp]
    \centering
        \includegraphics[width=\textwidth]{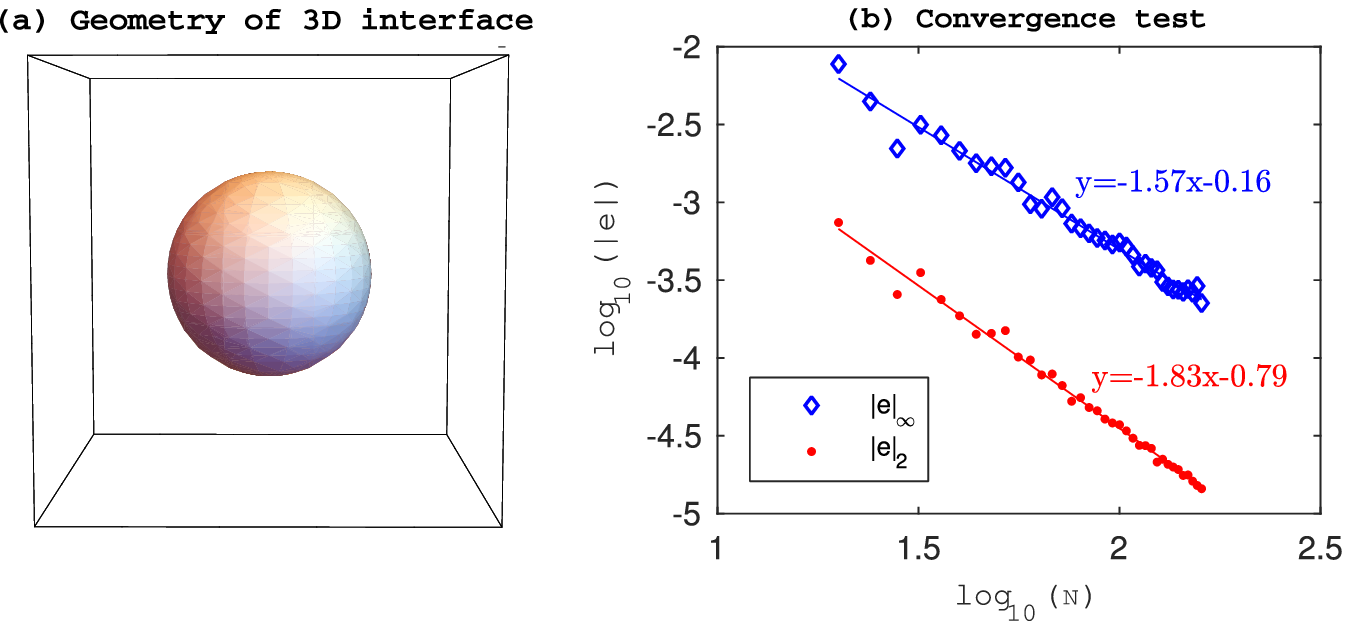}
    \caption{Example 3D-1: variable coefficient with spherical interface. (a): Geometry of the interface. (b): Error $\|e\|$ as a function of number of grid points in each coordinate direction, $N$, as a log-log plot including slope of its linear fit.}\label{3DSphere}
\end{figure}

\subsubsection{Example 3D-2. Variable coefficient with torus interface.}\label{sec:torus}

For the same $\beta$, $u$ and $f$ in example 1, we test this iterative method on $\Omega=[-1,1]\times[-1,1]\times[-1,1]$ with a toroid interface described by the level set function $\phi(x,y,z)=(x^2 + y^2 + z^2 +R^2-r^2 )^2-4R^2( x^2 + y^2 )$, where $R=0.501+\sqrt{2}/10$,  $r=0.251$.  
The geometry of the interface and second-order convergence in $L^\infty$ and $L^2$ are in figure \ref{3DTorus}.

\begin{figure}[!htbp]
    \centering
        \includegraphics[width=\textwidth]{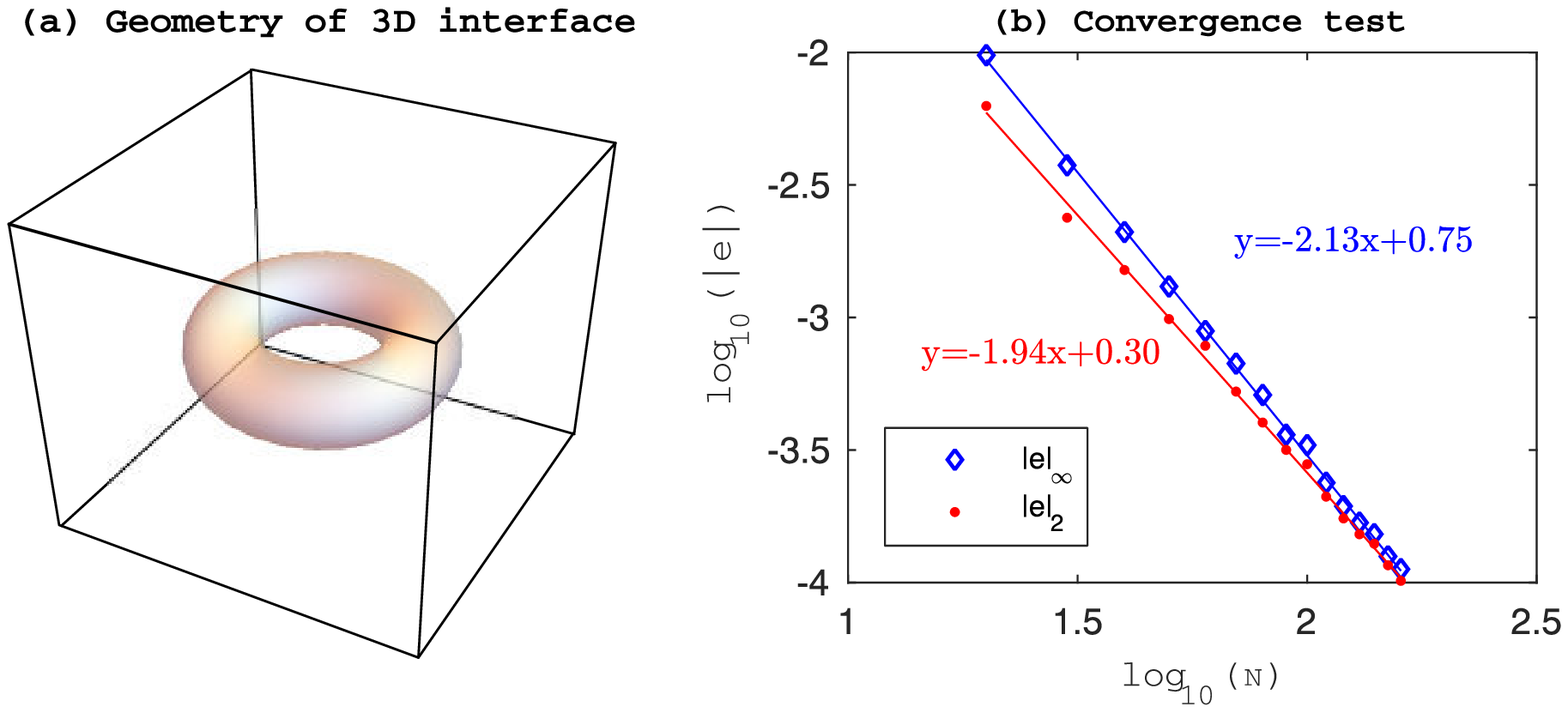}
    \caption{Example 3D-2: variable coefficient with torus interface. (a): Numerical solution. (b): Error $\|e\|$ as a function of number of grid points in each coordinate direction, $N$, as a log-log plot including slope of its linear fit.}\label{3DTorus}
\end{figure}

\section{Greater efficiency via alternative stopping criteria}
\label{sec:stopping}


\subsection{Iteration counts for standard stopping criterion}





In most of the cases shown above, the number of iterations required to reach the stopping criterion is small, which makes this iterative method efficient, as demonstrated in figure \ref{fig:23D_Niter}.  More specifically, approximately 10-20 iterations are used in 2D cases, and approximately 5-10 iterations in the 3D cases. For high contrast cases (example {\it 2D-4}), the number of iterations becomes larger (approximately 50-150, as seen in figure \ref{fig:HctrsNiter}),
but the number of iterations is essentially independent of the number of grid points.

These examples demonstrate that the present method may be
practical and efficient for time-dependent problems where the elliptic solver is needed at every time step. Below we discuss possibilities of further reducing the iterations counts through alternative stopping criteria -- e.g., by using a small, fixed number of iterations in~\S\ref{sec:fixiteration}, and propose some other feasible stopping criteria in~\S\ref{sec:altstop}.

\begin{figure}[!htbp]
\centering
\includegraphics[width=\textwidth]{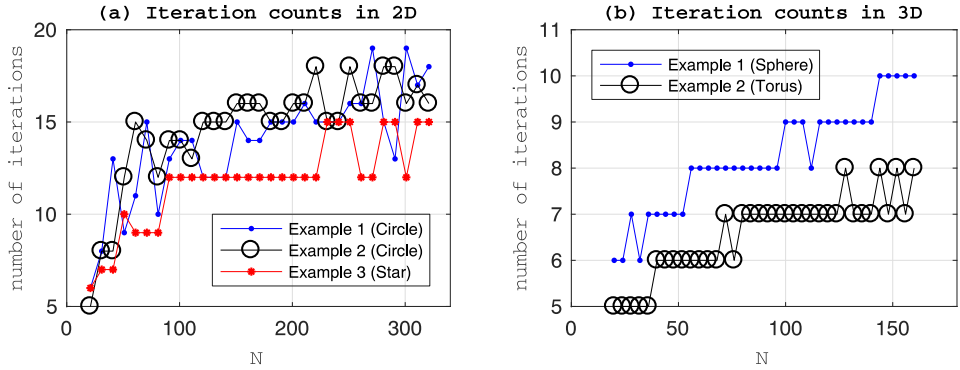}
\caption{Number of iterations for (a) 2D examples and (b) 3D examples.}
\label{fig:23D_Niter}
\end{figure}

\begin{figure}[!htbp]
\centering
\includegraphics[width=0.5\textwidth]{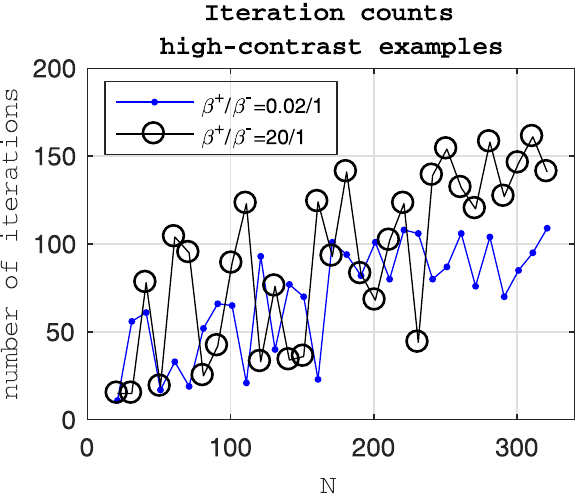}

\includegraphics[width=\textwidth]{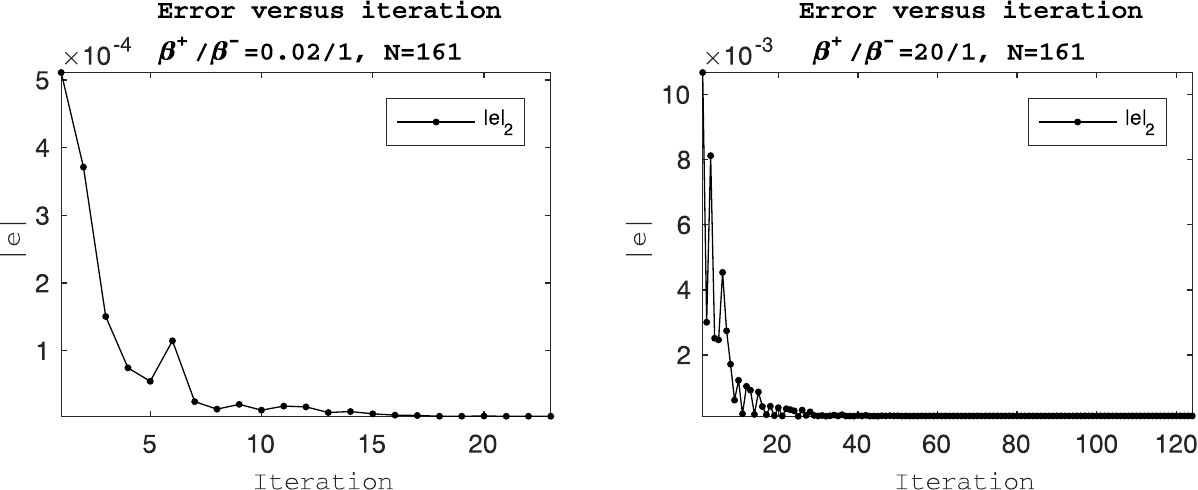}
\caption{Errors and iterations for the high-contrast cases from \S\ref{sec:2dhctrs}. 
Top: Number of iterations as a function of the number of grid points in each coordinate direction, $N$.
Bottom: $L^2$ error as a function of iterations, for $N=161$.}
\label{fig:HctrsNiter}
\end{figure}


\subsection{Greater efficiency via a small, fixed number of iterations}
\label{sec:fixiteration}

In most cases, the accuracy improves tremendously after only a few iterations; in other words, the latter iterations make only small modifications to the solution in order to satisfy the stopping criterion. Therefore, in practice, we may speed up this numerical method by using a fixed number of iterations without losing too much accuracy. Figure~\ref{5iterations} shows results of both 2D and 3D examples with only a small number of iterations (five), which still show second-order accuracy. 

\begin{figure}[!htbp]
\centering
\includegraphics[width=\textwidth]{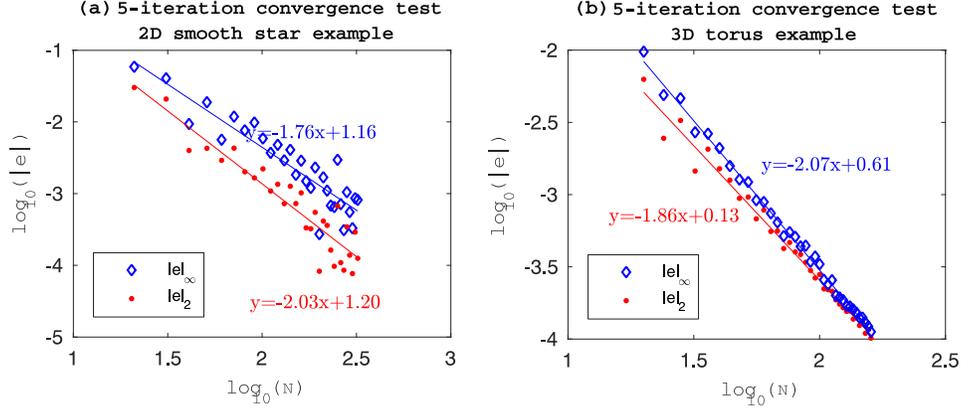}
\caption{Error as a function of number of grid points in each coordinate direction, $N$, using a fixed number of iterations (five) for more efficient computations. (a) Smooth star example in~\S\ref{2DEXs}. (b) Torus example in~\S\ref{3DEXs}.}
\label{5iterations}
\end{figure}

\subsection{Other stopping criteria}
\label{sec:altstop}
Several other stopping criteria were also tested,
beyond the standard criterion from \S\ref{sec:2d-iterative},
by using different combinations of criteria for the smallness of the differences
$u^{[k]}_{d}=\|\mathbf{u}^{[k+1]}-\mathbf{u}^{[k]}\|_\infty$
and/or
$F^{[k]}_{d}=\|\mathbf{F}^{[k+1]}-\mathbf{F}^{[k]}\|_\infty$.
A promising criterion may be to stop when $u^{[k]}_d<h^2$,
without enforcing any smallness criterion on $F^{[k]}_d$;
in some tests, this led to second-order accuracy with fewer
iterations, although we have not yet tested this criterion on a wide array of cases.

\section{Comparisons with formulations of other methods}
\label{sec:comparisons}

In this section we compare the present formulation with the formulations of other methods
\cite{wiegmann2000explicit,b04,rw03,lai2008simple,mnr11,mnr17},
to add to the comparisons with the GFM \cite{lfk00,ls03mc} and IIM \cite{ll94,li96,fk01,li01,dil03}
that were described above in \S\ref{1DFiniteDiff}.

In \cite{wiegmann2000explicit}, another approach had been taken to obtain a symmetric operator;
the derivation used Taylor series expansions and derivatives of jump conditions,
which can be somewhat complex compared to the simple derivations of the present paper
that mainly involve centered finite difference formulas.
Note that the present method and the method of \cite{wiegmann2000explicit} are, in fact,
distinct.  As one difference, in the 1D versions of the two methods,
the method of \cite{wiegmann2000explicit} has a non-symmetric operator in 1D,
whereas the method of the present paper has a symmetric operator in 1D.
Also, the method \cite{wiegmann2000explicit} utilizes a discretization of the
standard Laplacian operator, whereas the present method maintains the symmetry of
the elliptic operator that includes $\beta$.

In \cite{b04}, an interesting approach was proposed which, like the present method, involves a symmetric 
operator and an iterative method to determine an adjusted forcing. 
The derivation is somewhat complex in that it is a version of the IIM
and therefore uses Taylor series and derivatives of jump conditions.
The derivation is presented in 2D, but no 3D results are presented.
Also, their iterative procedure does not produce a first-order-accurate solution
at the first iteration, and therefore it is likely to require a very large number of 
iterations (as possibly indicated by their very small relaxation parameter).
The number of iterations, however, are not reported, and the iterative methods and
stopping criterion are not described in detail. 
In contrast, in the present paper, the first iteration is essentially the GFM,
and the simple finite-difference formulation allows for efficient setup and computation
even in 3D.

In \cite{lai2008simple}, following \cite{rw03}, another interesting approach 
is used to obtain a symmetric operator with corrections to the right-hand side.
The method is implemented in 2D, but no 3D results are presented.
Also, the method is presented for the standard Laplacian operator,
not for the case of discontinuous and/or spatially varying coefficient $\beta(\mathbf{x})$.


Another interesting method called the correction-function method 
has been developed by building on the GFM and
computing a corrected forcing function to achieve higher-order accuracy
\cite{mnr11,mnr17}.
In this method, the corrected forcing function is not derived explicitly;
instead, the corrected forcing function is shown to satisfy a certain new PDE,
and the new PDE is solved numerically to determine
the corrected forcing function.
The method has been demonstrated to achieve second-order and even fourth-order accuracy,
although it has not yet been implemented for 3D problems 
and it has only been developed for cases with constant coefficients and piecewise-constant coefficients.
It is similar to the method of the present paper in that both methods
seek to compute corrections to the GFM; the present paper's method perhaps offers a simpler
formulation (involving only one-dimensional finite differences) 
and simpler implementation for 3D problems.

\section{Conclusions}
\label{sec:conclusions}

In this article, a simple numerical scheme is proposed to obtain second-order accuracy in solving the Poisson equation with sharp interfaces.  One important contribution is a simple derivation that mainly involves centered finite difference formulas, with less reliance on the Taylor series expansions and derivatives of jump conditions used in typical immersed interface method derivations. The derivation here preserves the symmetry of the differential operator, and the method is formulated on a Cartesian grid. The accuracy of the method is proved rigorously in 1D and verified numerically in 2D and 3D. The three-dimensional problems are relatively easy to set up due to the method's simple derivation.

An iterative procedure was used for solving 2D or 3D problems, and
the desired second-order accuracy can be obtained with only a small, fixed number 
of iterations (typically 5), which makes this method efficient, even in 3D. 
In the future it would be interesting to investigate other algorithmic choices;
for instance, perhaps an
iterative method could be designed that requires an even
smaller number (e.g., 2 or 3) of iterations, or perhaps the method could be successful 
if the iterated correction terms 
were instead written as part of the left-hand-side linear operator,
in which case the symmetry of the operator is lost but the non-symmetric system could possibly be solved
without the need for the outer iterations introduced in the present paper.
Also, here we did not make a great effort to optimize the algorithms for cases with
high-contrast coefficients, which require higher iteration counts, 
but such an effort would be interesting to pursue in the future.

The proposed method may be applied to solving time-dependent problems
that require the solution of an elliptic PDE at each time step --
for example, the heat equation with interfaces or multiphase flow problems
\cite{glmp09,sfso98,ss17}.
In such applications, the present method could be used with 
any characterization of the interface (level set, Lagrangian markers, etc.),
and the interface could have a location and shape that evolves in time.



\section*{Acknowledgments}

The research of S.N.S. is partially supported by a Sloan Research Fellowship and
NSF grant AGS-1443325.
The authors thank J. T. Beale and A. Donev for helpful comments.

\appendix

\section{2D discretization with two interface crossings}
\label{app:2D-two-crossings}

In this appendix, it is shown how to formulate the finite difference method
in a case that is more general than in \S\ref{sec:fin-diff-2d}.

Suppose the interface crosses the stencil of point $(x_i,y_j)$ in two places,
as shown in Fig.~\ref{fig:AppendixStencil}. 
The crossing between $(x_i,y_j)$ and $(x_{i+1},y_j)$ is as in \S\ref{sec:fin-diff-2d},
and now a new, second crossing is present between
$(x_i,y_j)$ and $(x_i, y_{j+1})$. Accordingly, define $\zeta=(y_{j+1}-y_J)/\Delta y$, 
where $(x_i, y_J)\in \Gamma$, and assume $(x_i, y_j)\in \Omega^-$ 
and $(x_i, y_{j+1})\in \Omega^+$. 

\begin{figure}[!htbp]
\centering
\includegraphics[width=0.5\textwidth]{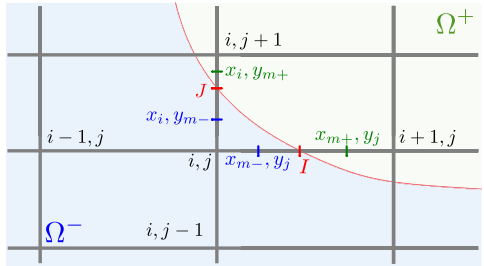}
\caption{Non-standard point $(x_i,y_j)$ with interface crossing the stencil in both $x$ and $y$ directions.}
\label{fig:AppendixStencil}
\end{figure}

To obtain a finite difference method with a symmetric operator in this case, 
start by writing the 1D formula from (\ref{symmetric_uxx}) as
\begin{equation}
S^x u=(\beta u_x)_x\cdot(2-\theta)/2+F_{cor}^x+O(\Delta x),
\end{equation}
where $S^x$ is the symmetric finite difference operator and
$F_{cor}^x$ is the correction term.
A similar formula can be derived for a symmetric finite difference operator
in the $y$ direction:
\begin{equation}
S^y u=(\beta u_y)_y\cdot(2-\zeta)/2+F_{cor}^y+O(\Delta y).
\end{equation}
Summing up the two leads to 
\begin{equation}
S^x u + S^y u = f -(\beta u_x)_x\cdot\theta/2-(\beta u_y)_y\cdot\zeta/2+F_{cor}^x+F_{cor}^y +O(\Delta x)+O(\Delta y),
\label{eqn:2d-general-appendix}
\end{equation}
which is the desired formula. 
Also note that the derivation in 3D follows the same simple principles 
by including the addition of a third component for $S^z u$.

Written out in detail, (\ref{eqn:2d-general-appendix}) takes the form
\begin{eqnarray}\notag
&& \frac{1}{(\Delta x)^2}\left(\beta(x_{i-\frac{1}{2}},y_j)\cdot u_{i-1,j}-\left(\beta(x_{i-\frac{1}{2}},y_j)+\hat\beta\right)u_{i,j}+\hat\beta\cdot u_{i+1,j}\right)\\[1ex]
&& +\frac{1}{(\Delta y)^2}\left(\beta(x_i,y_{j-\frac{1}{2}})\cdot u_{i,j-1}-\left(\beta(x_i,y_{j-\frac{1}{2}})+\tilde\beta\right)u_{i,j}+\tilde\beta\cdot u_{i,j+1}\right)\\[1ex]\notag
&&
=f_{i,j}-(\beta u_x)_x(x_i,y_j)\cdot\frac{\theta}{2}-(\beta u_y)_y(x_i,y_j)\cdot\frac{\zeta}{2}+F^x_{cor}+F^y_{cor}+O(\Delta x),
\label{symmetric_2d_uxx_uyy}
\end{eqnarray}
where $\hat \beta$ is the same as ($\ref{eqn:betahat}$) and
\begin{equation}
\tilde \beta=\frac{\beta(x_i,y_{m+})\cdot\beta(x_i,y_{m-})}{(1-\zeta)\cdot\beta(x_i,y_{m+})+\zeta\cdot\beta(x_i,y_{m-})},
\end{equation}
with midpoints $y_{m+}=(y_J+y_{j+1})/2$ and $y_{m-}=(y_j+y_{J})/2$, and
\begin{equation}
\begin{aligned}
F^x_{cor}
&=
\frac{\hat\beta \theta}{\beta(x_{m+},y_j)\Delta x}\Bigg\{\frac{\beta(x_{m+},y_j)a(x_I,y_j)}{\theta \Delta x}+[\beta u_x]\hfill
\\
&+\Big(\theta\cdot\left( f-(\beta u_y)_y\right)(x_{I+},y_j)+(1-\theta)\cdot\left( f-(\beta u_y)_y\right)(x_{I-},y_j)\Big)\frac{\Delta x}{2}\Bigg\},
\label{F^x_{cor}rection}
\end{aligned}
\end{equation}

\begin{equation}
\begin{aligned}
F^y_{cor}
&=
\frac{\tilde\beta \zeta}{\beta(x_i,y_{m+})\Delta y}\Bigg\{\frac{\beta(x_i,y_{m+})a(x_i,y_J)}{\zeta \Delta y}+[\beta u_y]\hfill
\\
&+\Big(\zeta\cdot\left( f-(\beta u_x)_x\right)(x_i,y_{J+})+(1-\zeta)\cdot\left( f-(\beta u_x)_x\right)(x_i,y_{J-})\Big)\frac{\Delta y}{2}\Bigg\},
\label{F^y_{cor}rection}
\end{aligned}
\end{equation}


where $[\beta u_x]=[\beta u_n]n^1-[\beta u_\tau]n^2$ and $[\beta u_y]=[\beta u_\tau]n^1+[\beta u_n]n^2$.

Several variations could also used. For instance, on the right-hand side of
(\ref{symmetric_2d_uxx_uyy}),
one may replace $(\beta u_x)_x$ by $f-(\beta u_y)_y$, 
or one may replace $(\beta u_y)_y$ by $f-(\beta u_x)_x$.
Similar replacements could be made in (\ref{F^x_{cor}rection}) and (\ref{F^y_{cor}rection}).
For our numerical tests, we used the $(\beta u_y)_y$ based version: 
$S^x u + S^y u = f\cdot(2-\theta)/2+(\beta u_y)_y\cdot(\theta-\zeta)/2+F_{cor}^x+F_{cor}^y$.

\section{Relaxation}\label{relaxation}

As discussed in \S\ref{sec:2d-iterative},
Picard iteration works well in many cases, but we found that it sometimes diverges.
For this reason, as our standard iterative scheme, we instead use a 
simple relaxation scheme to bypass this difficulty and guarantee 
that the iterative scheme stops.
The idea behind the relaxation scheme is to update the forcing term as
\begin{equation}
\mathbf{F}^{[k]}=\alpha_k \mathbf{F}^{[T_k]}+(1-\alpha_k)\mathbf{F}^{[k-1]},
\end{equation}
which is a mixture between the previous forcing $\mathbf{F}^{[k-1]}$
and the temporary forcing $\mathbf{F}^{[T_k]}$ that would have been used 
if a Picard update would have been followed. 
The parameter $\alpha_k$ is chosen to guarantee that $\mathbf{u}^{[k+1]}$
is not too far away from $\mathbf{u}^{[k]}$.

One cycle of the relaxation scheme goes as follows.
Suppose $\mathbf{u}^{[k]}$ was computed by solving 
$A{\mathbf{u}^{[k]}=\mathbf{F}^{[k-1]}}$, 
and we now want to compute the next iteration. 
With $\mathbf{u}^{[k]}$, compute the temporary right-hand-side $\mathbf{F}^{[T_k]}$ 
by following the Picard update procedure from \S\ref{sec:2d-iterative}.
A temporary solution $\mathbf{u}^{[T_{k+1}]}$ is then obtained 
by solving $A\mathbf{u}^{[T_{k+1}]} =\mathbf{F}^{[T_k]}$. 
Now the parameter $\alpha_k$ is determined to  
guarantee that $\mathbf{u}^{[k+1]}$ is not too far away from $\mathbf{u}^{[k]}$;
to this end, define the ratio
$r_k=\|\mathbf{u}^{[T_{k+1}]}-\mathbf{u}^{[k]} \| / \|\mathbf{u}^{[k]}-\mathbf{u}^{[k-1]}\|$.
If this ratio is small ($r_k < 1$), then there is no need for relaxation
and we set $\alpha_k=1$.
If this ratio is large ($r_k \ge 1$), then we set
$\alpha_k=\rho/r_k$, where $\rho$ is a 
preselected factor between $0$ and $1$. In practice, 
we pick $\|\cdot\|=\|\cdot \|_\infty$ and $\rho$ to be between 0.9 and 0.99.
With this relaxation scheme for the forcing $\mathbf{F}^{[k]}$,
the solution is likewise updated as
$\mathbf{u}^{[k+1]}=\alpha_k \mathbf{u}^{[T_{k+1}]}+(1-\alpha_k)\mathbf{u}^{[k]}$, 
as a mixture of the previous solution estimate 
$\mathbf{u}^{[k]}$
and the temporary solution estimate $\mathbf{u}^{[T_{k+1}]}$ that would have been used 
if a Picard update would have been followed. 

The differences
$u^{[k]}_{d}=\|\mathbf{u}^{[k+1]}-\mathbf{u}^{[k]}\|_\infty$
and
$F^{[k]}_{d}=\|\mathbf{F}^{[k+1]}-\mathbf{F}^{[k]}\|_\infty$
are guaranteed to be decreasing as $k$ increases
if this relaxation procedure is followed.
Specifically, the relaxation procedure leads to either $u_d^{[k]}=r_k u_d^{[k-1]}$ 
(if $r_k<1$) or $u_d^{[k]}=\rho u_d^{[k-1]}$ (if $r_k\ge 1$).
Therefore,
$u_d^{[k]}$ is decreasing in $k$ and hence the stopping criterion will be met 
in a finite number of iterations.
Note that this stopping criterion, 
based on $\|\mathbf{u}^{[k+1]}-\mathbf{u}^{[k]}\|$,
does not guarantee that the relaxation procedure's iterate $\mathbf{u}^{[k+1]}$ 
is actually close to the exact solution; 
nevertheless, one would expect that it should be at least a 
better estimate than the first iterate $\mathbf{u}^{[1]}$,
which is the first-order accurate GFM solution; and in practice we find from
the examples in \S\ref{sec:examples} that the iterations terminate at a
second-order accurate solution.

\bibliographystyle{spmpsci}      
\bibliography{sambib}   


\mbox{}\\
\noindent Received: / Accepted:\\

\noindent Chung-Nan Tzou: \email{ctzou@wisc.edu}\\
Department of Mathematics, University of Wisconsin--Madison, Madison, WI 53706, United States.\\

\noindent Samuel N. Stechmann: \email{stechmann@wisc.edu}\\
Department of Mathematics, University of Wisconsin--Madison, Madison, WI 53706, United States.
\end{document}